\DeclarePairedDelimiter\ceil{\lceil}{\rceil}
\newtheorem{theorem}{Theorem}[section]
\newtheorem{lemma}[theorem]{Lemma}
\newtheorem{claim}[theorem]{Claim}
\newtheorem{proposition}[theorem]{Proposition}
\theoremstyle{definition}
\newtheorem{definition}[theorem]{Definition}
\theoremstyle{plain}
\newcounter{theoremintro}
\newtheorem{theoremi}[theoremintro]{Theorem}
\newcommand{\cgr}{{\mathrm{CGR}}}
\newcommand{\lex}{{\mathrm{lex}}}
\newcommand{\cE}{{\mathcal E}}
\newcommand{\Zb}{{\mathbb Z}}
\newcommand{\Nb}{{\mathbb N}}
\newcommand{\brak}[1]{\left(#1\right)}
\newcommand{\asdim}{\text{asdim}_B}
\newcommand{\emb}{\hookrightarrow}
\numberwithin{equation}{section}
\renewcommand{\phi}{\varphi}
\begin{document}

\title[Hyperfiniteness and Borel asymptotic dimension of boundary actions]{Hyperfiniteness and Borel asymptotic dimension of boundary actions of hyperbolic groups}

\thanks{The authors were funded by the Deutsche Forschungsgemeinschaft (DFG, German Research Foundation) under Germany’s Excellence Strategy – EXC 2044 – 390685587, Mathematics Münster – Dynamics – Geometry – Structure, the Deutsche Forschungsgemeinschaft (DFG, German Research Foundation) – Project-ID 427320536 – SFB 1442, and ERC Advanced Grant 834267 - AMAREC.
}

\author{Petr Naryshkin}
\address{Petr Naryshkin,
Mathematisches Institut,
WWU M{\"u}nster, 
Einsteinstrasse\ 62, 
48149 M{\"u}nster, Germany}
\email{pnaryshk@uni-muenster.de}

\author{Andrea Vaccaro}
\address{Andrea Vaccaro, Mathematisches Institut,
WWU M{\"u}nster, 
Einsteinstrasse\ 62, 
48149 M{\"u}nster, Germany}
\email{avaccaro@uni-muenster.de}


\begin{abstract}
We give a new short proof of the theorem due to Marquis and Sabok, which states that the orbit equivalence relation induced by the action of a finitely generated hyperbolic group on its Gromov boundary is hyperfinite. Our methods permit moreover to show that every such action has finite Borel asymptotic dimension.
\end{abstract}


\maketitle

\section{Introduction}

The study of countable Borel equivalence relations is a prominent subject in descriptive set theory, whose interactions with dynamics have been thriving ever since
Feldman and Moore showed in \cite{FM} that every countable Borel equivalence relation is equal to the orbit equivalence relation of a Borel action
by a countable group. Due to this characterisation, the problem of understanding which Borel actions generate hyperfinite equivalence relations
has attracted considerable attention in the past decades.

This line of research began with Weiss' seminal work \cite{weiss}, where he proved that any Borel action of $\Zb$ gives rise to a hyperfinite orbit equivalence relation, and asked whether this holds for all amenable groups.
His question was
followed by more than 30 years of research during which larger and larger classes of amenable groups have been shown to generate hyperfinite
equivalence relations (\cite{JKL, GJ, SS}), culminating in the recent paper \cite{ConJacMarSewTuc20}. The main novelty of \cite{ConJacMarSewTuc20} is the
notion of \emph{Borel asymptotic dimension} (Definition \ref{def:Basdim}),
a Borel analogue of the homonymous large-scale invariant introduced by Gromov
in \cite{gromov}, which provides a sufficient condition to verify hyperfiniteness. More precisely, if $X$ is a standard Borel space
with extended metric $\rho$ (i.e. which also allows value $\infty$)
and $(X,\rho)$ has finite Borel asymptotic dimension, then the finite-distance
equivalence relation is hyperfinite (\cite[Theorem 1.7]{ConJacMarSewTuc20}).
Thanks to this innovative approach, the authors of \cite{ConJacMarSewTuc20} were able to
provide a positive answer to Weiss' question for the largest class of amenable groups to date.

Turning to non-amenable groups, Weiss' question is known to have negative answer. Indeed, every non-amenable group admits a Borel action whose orbit equivalence relation is not hyperfinite (\cite[Corollary 1.8]{JKL}). Nevertheless, it is still worthwhile to study specific natural actions of non-amenable groups, particularly when the orbit equivalence relation is amenable.
In this direction, \cite[Corollary 8.2]{DJK} shows that the tail equivalence relation on $S^\Nb$ is hyperfinite whenever $S$ is countable. An immediate
consequence of this is that the orbit equivalence relation of the action of a finitely generated free group $\mathbb{F}$ on its Gromov boundary
$\partial \mathbb{F}$ is hyperfinite. Indeed, $\partial \mathbb{F}$ naturally
embeds into $S^\Nb$, where $S$ is a symmetric set of free generators,
and with this identification the orbit equivalence relation on $\partial \mathbb{F}$ coincides with the tail equivalence on $S^\Nb$.  

Building on this result, in \cite{cubulated} it is proved that
the orbit equivalence relation of the boundary action $G \curvearrowright \partial G$ is hyperfinite for all cubulated hyperbolic groups $G$, and this was later successfully extended in \cite{MarSab} to all finitely generated hyperbolic groups. More recently, a further generalization of these methods was
carried in \cite{relative}, where a similar theorem is obtained for
relatively hyperbolic groups. More results on hyperfiniteness of orbit equivalence relations of boundary actions can
be found in \cite{PrSa}.

The key difference between a general hyperbolic group, equipped with a finite generating set $S$, and a free group is that for the former there is no canonical embedding $\partial G \emb S^\Nb$ that maps the orbit equivalence to a well-understood equivalence relation of the image. To fill this gap, the papers \cite{cubulated, MarSab, relative} develop sophisticated geometric techniques to study families of geodesic ray bundles, and to adapt the methods used in \cite{DJK}
to general boundary actions of hyperbolic groups.

The first main contribution of this paper is a new short proof of the main result
of \cite{MarSab}, stated explicitly below, which avoids most
of the geometric machinery employed in \cite{MarSab} and provides a direct
connection between the orbit equivalence relation of $G\curvearrowright \partial G$ and the tail equivalence on $S^\Nb$.

\begin{theoremi}[{\cite[Theorem A]{MarSab}}]\label{theorem:HF}
    The orbit equivalence relation induced by the action of a finitely generated hyperbolic group on its Gromov boundary is hyperfinite.
\end{theoremi}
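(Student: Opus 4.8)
\section*{Proof proposal}

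The plan is to produce a Borel map $\Psi\colon\partial G\to\Sigma^{\Nb}$, with $\Sigma$ a \emph{finite} alphabet, enjoying two properties: (a) $\Psi$ is a homomorphism from the orbit equivalence relation $E$ of $G\acts\partial G$ to the shift--tail equivalence relation $E_{\mathrm{tail}}$ on $\Sigma^{\Nb}$, i.e.\ $y\mathrel{E}x$ implies $\Psi(y)\mathrel{E_{\mathrm{tail}}}\Psi(x)$; and (b) for every $x$ the restriction of $\Psi$ to the orbit $G\cdot x$ is finite-to-one. Granting this, the theorem is immediate: since $G$ is finitely generated, $E$ is a countable Borel equivalence relation, and by \cite[Corollary 8.2]{DJK} the relation $E_{\mathrm{tail}}$ is hyperfinite, say $E_{\mathrm{tail}}=\bigcup_nF_n$ with $F_n$ an increasing sequence of finite Borel equivalence relations. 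Put $E_n:=\{(x,y)\in E:\Psi(x)\mathrel{F_n}\Psi(y)\}$. Then the $E_n$ are Borel equivalence relations, $E_n\sub E_{n+1}$, and $\bigcup_nE_n=E$ by (a); and $[x]_{E_n}=(G\cdot x)\cap\Psi^{-1}\!\big([\Psi(x)]_{F_n}\big)$ is a finite union, over the finite set $[\Psi(x)]_{F_n}$, of sets that are finite by (b). Hence each $E_n$ has finite classes, so $E$ is hyperfinite.

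Everything thus reduces to constructing $\Psi$. Fix the finite symmetric generating set $S$; the Cayley graph is $\delta$-hyperbolic, and $\partial G$ is identified with asymptote-classes of geodesic rays from $e$. For $\xi\in\partial G$ let $\mathrm{B}(\xi)\sub G$ be the geodesic ray bundle of $\xi$, the union of all geodesic rays from $e$ to $\xi$. Three geometric facts are used: (i) two geodesic rays from $e$ to a common boundary point are uniform fellow-travellers, so $\mathrm{B}(\xi)$ is \emph{thin}, its intersection with any sphere about $e$ having diameter bounded in terms of $\delta$; (ii) if $\eta=g\xi$ then, by thinness of the ideal triangle $e,g,\eta$, the bundles $\mathrm{B}(\eta)$ and $g\cdot\mathrm{B}(\xi)$ lie at bounded Hausdorff distance outside a ball about $e$; and (iii) for any geodesic ray $r$ from $e$ to $\xi$ the integers $n\mapsto n-d(e,g\cdot r(n))$ are non-decreasing and bounded above by $d(e,g)$, hence eventually equal to a constant $c=c(g,\xi)$, so that $g\cdot r$ shifted by $c$ is, from some point on, a genuine geodesic ray from $e$ to $\eta$. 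Fact (iii) is the elementary observation at the heart of the argument: up to tail equivalence one is free to follow the $g$-translate of \emph{any} geodesic ray to $\xi$. Now fix a large constant $K$ and, choosing lexicographically a point $p_n(\xi)\in\mathrm{B}(\xi)$ at distance $n$ from $e$, let the $n$-th letter be the recentred local picture $p_n(\xi)^{-1}\!\big(\mathrm{B}(\xi)\cap B(p_n(\xi),3K)\big)\sub B(e,3K)$, and finally pass to the quotient of the finite alphabet $\{\text{subsets of }B(e,3K)\}$ by the (finitely-classed) equivalence relation generated by translation by $B(e,O(\delta))$ and by the Hausdorff-$O(\delta)$ relation; this defines $\Psi(\xi)\in\Sigma^{\Nb}$. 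By (i)--(iii), when $\eta=g\xi$ the letters $\Psi_n(\eta)$ and $\Psi_{n+c}(\xi)$ differ, for all large $n$, only by a translation from $B(e,O(\delta))$ and by Hausdorff distance $O(\delta)$, hence agree in $\Sigma$; so $\Psi(\eta)$ and $\Psi(\xi)$ are tail equivalent with shift $c$, giving (a). For (b): if $gx$ and $g'x$ have the same $\Psi$-value then the bundles of $gx$ and $g'x$, together with the chosen recentring points, are matched at every scale, which should pin $g^{-1}g'$ down to a finite set by properness of $G\acts\mathrm{Cay}(G,S)$; recording in each letter a little more local data (e.g.\ the position of $e$ while it still lies in the window) makes this precise.

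The main obstacle is exactly the construction in the previous paragraph: showing that the coarse local geometry of the geodesic ray bundle can be recorded over a finite alphabet so as to convert the orbit relation into tail equivalence. This is the core difficulty addressed in \cite{MarSab} through a careful analysis of geodesic ray bundles; the present route isolates the soft inputs (i)--(iii)---in particular (iii), which removes the need to make a single coherent choice of ray along an orbit---so that only the thinness estimates remain and the verification of (a) and (b) is short. A minor point worth noting, connecting with the free-group picture recalled in the introduction where $\partial\mathbb F\emb S^{\Nb}$ identifies the orbit relation with tail equivalence, is that tail equivalence restricted to the prefix-sensitive set of geodesic words is still hyperfinite, being the restriction of the hyperfinite $E_{\mathrm{tail}}$ to a Borel set; but in the argument above one only ever uses $E_{\mathrm{tail}}$ on the full space $\Sigma^{\Nb}$.
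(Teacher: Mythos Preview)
Your strategy differs from the paper's and, as written, has a genuine gap.

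The paper does \emph{not} build a homomorphism to tail equivalence. Its map $\Phi\colon\partial G\to S^{\Nb}$ sends $\eta$ to the type of the lexicographically least ray in $\cgr(e,\eta)$; this is a Borel \emph{injection} (Proposition~\ref{prop:Borel}) but need not carry orbits into tail-classes. The entire geometric content is Lemma~\ref{lemma:bound}: each $G$-orbit is mapped by $\Phi$ into at most $(M_{5\delta})^2$ tail-equivalence classes. The proof is short because lex-minimality, via Lemma~\ref{lemma:SubstitutionInCGR}, forces every subsegment of the chosen ray to be the lex-minimal geodesic between its endpoints; hence the tail of $\sigma_{g\eta}$ past a fixed index is determined by a pair of points lying in two fixed $5\delta$-balls, giving the bound $(M_{5\delta})^2$. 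Hyperfiniteness then follows in one line from \cite[Proposition~1.3(vii)]{JKL}: the pullback of $E_{\mathrm{tail}}$ along a Borel injection is hyperfinite, and each $E_G^{\partial G}$-class splits into at most $(M_{5\delta})^2$ classes of that pullback.

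Your scheme instead asks that $\Psi$ be simultaneously a homomorphism to $E_{\mathrm{tail}}$ (your (a)) and finite-to-one on each orbit (your (b)), and these requirements pull against each other. To secure (a) you quotient the finite alphabet by the equivalence relation \emph{generated} by $O(\delta)$-translations and Hausdorff-$O(\delta)$ perturbations; since these operations do not commute, the generated relation can chain, and you give no control on how much of the alphabet survives. Your only argument for (b) is that ``recording the position of $e$ while it still lies in the window'' fixes things, but $e$ lies in the window only for indices $n\le 3K$, so this alters only an initial segment of $\Psi(\xi)$, and any added data is itself subject to the same quotienting that (a) requires. You have therefore not excluded the possibility that infinitely many orbit-equivalent boundary points have recentred bundle pictures landing in the same $\Sigma$-class at every scale. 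This is exactly the tension the paper's approach dissolves: by using an injection rather than a homomorphism, no quotienting of the alphabet is needed, injectivity is automatic, and the single finiteness statement ``boundedly many tail-classes per orbit'' replaces your (a) and (b) at once.
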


Our strategy to prove Theorem \ref{theorem:HF} starts with fixing
some linear order on a finite generating set $S$ of $G$. This induces a 
well-defined Borel function mapping each $\eta \in \partial G$ to the unique geodesic ray converging to it which is minimal (or rather whose \emph{type} is minimal, see \S \ref{S.Borel}) in the lexicographical order on $S^\Nb$. That provides an embedding $\Phi \colon \partial G \emb S^\Nb$ (Definition \ref{def:Gamma_eta}). While it might not be the case that the orbit equivalence is mapped to the tail equivalence under this map, we show in Lemma \ref{lemma:bound} that, thanks to hyperbolicity, the former contains only finitely many classes of the latter. The result then follows from \cite[Proposition 1.3 (vii)]{JKL}.

Another advantage of this approach is that our methods can be easily adapted to the framework introduced in \cite{ConJacMarSewTuc20}. Indeed, if $\rho_s$
is the extended metric on $S^\Nb$ induced by the left shift $s \colon S^\Nb \to S^\Nb$, then \cite[Lemma 8.3]{ConJacMarSewTuc20} shows that $(S^\Nb, \rho_s)$
has Borel asymptotic dimension 1. Our technical result
Lemma \ref{lemma:bound} shows that we can essentially pull back this information
via the aforementioned embedding $\Phi \colon \partial G \to S^\Nb$ to
$\partial G$, and obtain the following result.

\begin{theoremi}\label{theorem:asdim}
    The action of a finitely generated hyperbolic group on its Gromov boundary has finite Borel asymptotic dimension. 
\end{theoremi}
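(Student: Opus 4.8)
The plan is to transport the Borel asymptotic dimension bound from $(S^\Nb, \rho_s)$ to $\partial G$ via the embedding $\Phi \colon \partial G \emb S^\Nb$ of Definition \ref{def:Gamma_eta}, using Lemma \ref{lemma:bound} as the crucial geometric input. First I would fix a finite symmetric generating set $S$ of $G$ with a linear order, and recall that the orbit equivalence relation $E_{G \acts \partial G}$ is, by Lemma \ref{lemma:bound}, a subequivalence relation of $\Phi^{-1}(E_t)$, where $E_t$ is the tail equivalence on $S^\Nb$, and moreover each $E_{G\acts\partial G}$-class meets only finitely many $E_t$-classes inside $\Phi(\partial G)$. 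Equivalently, if $\rho_s$ denotes the extended graph metric on $S^\Nb$ coming from the shift $s$, then the pullback extended metric $\rho := \Phi^*\rho_s$ on $\partial G$ has the property that $\rho(\eta, \eta') < \infty$ whenever $\eta \mathrel{E_{G\acts\partial G}} \eta'$; in other words $E_{G\acts\partial G}$ refines the finite-distance relation of $(\partial G, \rho)$.

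The key point is that finite Borel asymptotic dimension of a metric is inherited under a \emph{coarse embedding} with uniformly bounded-to-one behaviour on the level of metric balls, and that is exactly what Lemma \ref{lemma:bound} provides. Concretely, by \cite[Lemma 8.3]{ConJacMarSewTuc20}, $(S^\Nb, \rho_s)$ has Borel asymptotic dimension $1$; so for every $r$ there is a Borel cover $\cU$ of $S^\Nb$ by sets that are $r$-disjoint in $2$ colours and have uniformly bounded diameter (in $\rho_s$). Pulling $\cU$ back along $\Phi$ gives a Borel cover of $\partial G$; each pulled-back piece has uniformly bounded $\rho$-diameter because $\Phi$ is an isometry onto its image for the pullback metric, and $r$-disjointness in each colour is preserved since $\Phi$ is injective and $\rho = \Phi^*\rho_s$. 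This shows $(\partial G, \rho)$ has finite Borel asymptotic dimension. Finally, by Definition \ref{def:Basdim}, to conclude that the \emph{action} $G \acts \partial G$ has finite Borel asymptotic dimension I need the extended metric $\rho$ to be compatible with the action in the appropriate sense — essentially that the finite-distance relation of $\rho$ refines $E_{G\acts\partial G}$ and that $\rho$ restricted to each orbit is Borel and proper. The containment $E_{G\acts\partial G} \subseteq (\text{finite-}\rho\text{-distance})$ was noted above; the reverse containment on orbits, together with properness, follows because $\Phi$ maps each $E_{G\acts\partial G}$-class into finitely many $E_t$-classes, and within $S^\Nb$ each $\rho_s$-ball meets a given $E_t$-class in a set of bounded size (points at shift-distance $\le n$ differ by bounded prefixes).

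The main obstacle I anticipate is bookkeeping at the interface between the two notions of ``finite Borel asymptotic dimension'': Definition \ref{def:Basdim} is phrased for a group action (via a sequence of compatible extended metrics, one adapted to each ball in the generating set, or equivalently via covers of the orbit equivalence relation), whereas \cite[Lemma 8.3]{ConJacMarSewTuc20} gives a statement about the single metric space $(S^\Nb,\rho_s)$. I would need to check carefully that the metric $\rho = \Phi^*\rho_s$, or rather the family of metrics obtained by composing with the finitely-many $E_t$-classes picked out by Lemma \ref{lemma:bound}, genuinely yields an admissible witness for finite Borel asymptotic dimension of the action in the sense of Definition \ref{def:Basdim}; this is where the ``only finitely many tail classes'' clause of Lemma \ref{lemma:bound} does the real work, upgrading a subequivalence-relation statement to an equality of finite-distance relations up to a bounded factor. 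Everything else — Borelness of the covers, preservation of colours and of the diameter bound under the injective map $\Phi$ — is routine.
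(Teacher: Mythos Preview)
There is a genuine gap. The Borel asymptotic dimension of the action $G\curvearrowright\partial G$ is, by Definition~\ref{def:Basdim} and the paragraph following it, computed with respect to the \emph{Schreier graph metric} $\rho_{\mathrm{Sch}}$ on $\partial G$, not with respect to an arbitrary extended metric. Your pullback metric $\rho=\Phi^*\rho_s$ is a different object, and the two are not coarsely equivalent. In fact your key claim that $E^{\partial G}_G$ refines the finite-$\rho$-distance relation, i.e.\ that $\Phi^*\rho_s(\eta,\eta')<\infty$ whenever $\eta,\eta'$ lie in the same $G$-orbit, is false: Lemma~\ref{lemma:bound} says only that each orbit meets at most $(M_{5\delta})^2$ tail equivalence classes, not a single one, so orbit-equivalent points can land in different tail classes and sit at $\rho_s$-distance $\infty$. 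Consequently, showing $\asdim(\partial G,\Phi^*\rho_s)<\infty$ (which is immediate, since $\Phi$ is an isometric embedding into a space of dimension $\le 1$) tells you nothing about $\asdim(G\curvearrowright\partial G)$.

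What actually works is to argue directly in the Schreier metric. Fix $r>0$, take a uniformly $\rho_s$-bounded Borel equivalence relation $E$ on $S^\Nb$ such that every $\rho_s$-ball of radius $T:=8(r+3\delta+2)$ meets at most two $E$-classes, and let $E_0$ be its pullback under $\Phi$. Two separate facts are needed. First, $E_0$ is uniformly $\rho_{\mathrm{Sch}}$-bounded: this requires the inequality $\rho_{\mathrm{Sch}}(\eta,\zeta)\le\rho_s(\sigma_\eta,\sigma_\zeta)$, which is not in your write-up and has to be proved (it follows because a common tail of $\sigma_\eta$ and $\sigma_\zeta$ yields, after shifting the corresponding CGRs, two rays converging to the same boundary point, forcing $\eta$ and $\zeta$ into the same orbit with an explicit distance bound). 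Second, each $\rho_{\mathrm{Sch}}$-ball of radius $r$ meets at most $2(M_{5\delta})^2$ classes of $E_0$: here one uses the full quantitative content of Lemma~\ref{lemma:bound}, namely that $\Phi(B_{\rho_{\mathrm{Sch}}}(\eta,r))$ is covered by $(M_{5\delta})^2$ sets of $\rho_s$-diameter at most $T$, each of which meets at most two $E$-classes. Your proposal invokes Lemma~\ref{lemma:bound} only at the level of equivalence relations and misses both the metric inequality and the diameter bound, which are precisely where the argument lives.
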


The paper is organized as follows. In \S \ref{S.pre} we recall some preliminaries on hyperbolic groups and prove a few basic facts about geodesic rays. In \S \ref{S.Borel} we isolate a specific Borel
uniformizing function from $\partial G$ to $S^\Nb$. In \S \ref{S.main} we recall the definitions of hyperfiniteness and Borel asymptotic dimension and give the proofs of Theorems \ref{theorem:HF} and \ref{theorem:asdim}.

\section{Preliminaries on hyperboilc groups} \label{S.pre}
We start with some essential preliminaries and notation on graphs, geodesic rays and hyperbolicity. We refer to \cite{MarSab}*{\S 2}
for a more detailed, self-contained introduction on these topics (for a
more throughout presentation of this subject we recommend instead \cite{BH}).

A \emph{graph} $X = (V, \cE)$ is a pair composed of a set of \emph{vertices} $V$ and a (symmetric) set of \emph{edges}
$\cE \subseteq V \times V$.
Two vertices $x,y \in V$ are \emph{adjacent}
if $(x,y) \in \cE$, and a \emph{path} $\Gamma$ in $X$ is a (possibly infinite) sequence of vertices $(x_n)_{n \in I}$ indexed by an interval $I \subseteq \Nb$ such that $(x_n, x_{n+1}) \in \cE$
for every $n \in I$. We shall sometimes denote $x_n$ by $\Gamma(n)$.
The \emph{graph metric} on $V$ is defined by setting $d(x,y)$ as the smallest possible length of a path
joining $x$ and $y$, and it has value $\infty$ if no such path exists. A \emph{geodesic path} is a path $(x_n)_{n \in I}$
such that $d (x_n, x_m) = | n - m |$ for every $n,m \in I$.
Two infinite geodesic paths $\Gamma_0, \Gamma_1$ are \emph{asymptotic} if their Hausdorff
distance $d_H(\Gamma_0,\Gamma_1)$ is finite. The \emph{visual boundary} $\partial X$ of the graph $X$ is the set of equivalence classes
of asymptotic infinite geodesic paths. Given $x \in V$ and $\eta \in \partial X$, a \emph{combinatorial geodesic ray} (or simply a \emph{CGR})
from $x$ to $\eta$ is a geodesic path $\Gamma = (x_n)_{n \in \Nb}$ whose equivalence class in $\partial X$ is $\eta$ and such that $x_0 = x$.
In this case, we say that $\Gamma$ \emph{starts} at $x_0$ and \emph{converges} to $\eta$.
Let $\cgr(x, \eta)$ denote the set of all CGR starting from $x$ and
converging to $\eta$.

Given $\delta > 0$, a locally finite graph $X = (V,\cE)$ (i.e. such that
every vertex has finitely many adjacent vertices)
is \emph{$\delta$-hyperbolic} if
for every geodesic triangle (a triangle whose sides are geodesic paths) $\Delta$ of $X$, 
each side of $\Delta$ is contained in the $\delta$-neighborhood of the other
two sides. In this case $\partial X$ can be equipped with a compact metrizable topology as follows. Fix a base point $x \in V$.
We say that a sequence $\eta_n \in \partial X$ converges to some $\eta \in \partial X$
if and only if there are $\Gamma_n \in \cgr(x,\eta_n)$ such that every subsequence of $(\Gamma_n)_{n \in \Nb}$ has a subsequence
converging to some $\Gamma \in \cgr(x, \eta)$ in the space $V^\Nb$ equipped with the product topology resulting from the discrete topology on $V$. The set $\partial X$ equipped with such topology, which does not depend on the choice of
the base point $x$, is called the \emph{Gromov boundary} of $X$ (see \cite{BH}*{\S III.3}).

Let $G$ be a finitely generated group and let $S$ be a finite symmetric generating set of $G$. The \emph{Cayley graph}
$\mathrm{Cay}(G,S)$ is the locally finite graph whose vertex set is $G$ and where $g,h \in G$ are adjacent if and only if $g^{-1}h \in S$.
The group $G$ is \emph{hyperbolic} is there exists $\delta > 0$ such that $\mathrm{Cay}(G,S)$ is $\delta$-hyperbolic. It is well-known that being hyperbolic is invariant with respect of quasi-isometry, and therefore it does not depend on the choice of $S$.
We denote $\partial \mathrm{Cay}(G,S)$ simply as $\partial G$, since also its topology does not depend on $S$ (see \cite{BH}*{Theorem 3.9}). The action
of $G$ on its Cayley graph naturally induces an action on combinatorial
geodesic rays which preserves asymptoticity. This in turn results
in a continuous amenable action $G\curvearrowright \partial G$ on its Gromov boundary (see e.g. \cite[Theorem B.3.4]{ADR}).

We isolate a couple of elementary properties of hyperbolic graphs which will play an important role in our arguments in \S \ref{S.main}.
\begin{lemma}[{\cite{BH}*{Lemma III.3.3}}]
\label{lemma:TubularLemma}
Let $X$ be a $\delta$-hyperbolic graph and let $\eta \in \partial X$. Suppose that $\Gamma_0 = \brak{x_n}_{n \in \Nb}$ and $\Gamma_1 = \brak{y_n}_{n \in \Nb}$ are CGR converging to $\eta$. 
    \begin{enumerate}[label=(\arabic*)]
\item \label{lemma:item1} If $x_0 = y_0$ then
\[
d(x_n, y_n) \le 2\delta, \text{ for every } n \in \Nb.
\]
\item In general, there are
\[
0 \le T_0, T_1 \le 2(d(x_0, y_0) + 3\delta + 2)
\]
such that 
\[
d(x_{T_0 + n}, y_{T_1+n}) \le 5\delta, \text{ for every } n \in \Nb.
\]
\end{enumerate}
\end{lemma}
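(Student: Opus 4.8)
The plan is to derive both parts from two facts: geodesic triangles in $X$ are $\delta$-thin, and two CGR with the same endpoint in $\partial X$ are asymptotic, hence at some finite Hausdorff distance $D:=d_H(\Gamma_0,\Gamma_1)$.

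For part~\ref{lemma:item1}, I would fix $n$, take $N>n+D+\delta$, pick $m$ with $d(x_N,y_m)\le D$, and look at the geodesic triangle with vertices $x_0=y_0$, $x_N$, $y_m$ whose sides are the length-$N$ initial segment of $\Gamma_0$, the length-$m$ initial segment of $\Gamma_1$, and some geodesic from $x_N$ to $y_m$. By $\delta$-thinness, $x_n$ is within $\delta$ of one of the other two sides; since $d(x_n,x_N)=N-n$ exceeds $D+\delta\ge d(x_N,y_m)+\delta$, it cannot be within $\delta$ of $[x_N,y_m]$, so $d(x_n,y_{m'})\le\delta$ for some $m'$. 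As $\Gamma_0$ and $\Gamma_1$ are geodesics issuing from the common vertex $x_0=y_0$, this forces $|n-m'|=|d(x_0,x_n)-d(x_0,y_{m'})|\le\delta$, and the triangle inequality then gives $d(x_n,y_n)\le2\delta$.

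For part~(2), write $R=d(x_0,y_0)$. The idea is to build a CGR from $x_0$ that closely fellow-travels $\Gamma_1$ and then invoke part~\ref{lemma:item1}. Using local finiteness of $X$, I would take a subsequential limit (in $V^{\Nb}$) $\Gamma_1'=(x_n')_{n\in\Nb}$ of the geodesic segments joining $x_0$ to $y_N$ as $N\to\infty$. Each such segment lies, by $\delta$-thinness of the triangle $x_0,y_0,y_N$, in the $\delta$-neighbourhood of $[x_0,y_0]\cup\Gamma_1$, so $\Gamma_1'$ does too; in particular $\Gamma_1'$ is asymptotic to $\Gamma_1$ — hence $\Gamma_1'\in\cgr(x_0,\eta)$ — and for every $n>R+\delta$ there is an index $k(n)$ with $d(x_n',y_{k(n)})\le\delta$. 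Elementary distance comparisons then give $|k(n)-n|\le R+\delta$ and $\bigl|\,|k(a)-k(b)|-|a-b|\,\bigr|\le2\delta$, and one can take $k$ non-decreasing. Now part~\ref{lemma:item1}, applied to $\Gamma_0$ and $\Gamma_1'$, gives $d(x_n,x_n')\le2\delta$, whence $d(x_n,y_{k(n)})\le3\delta$ once $n>R+\delta$. Taking $T_0$ to be the least integer exceeding $R+\delta$ and $T_1=k(T_0)$, one checks $0\le T_0,T_1\le2(R+3\delta+2)$, and for every $n$
\[
d(x_{T_0+n},y_{T_1+n})\le d(x_{T_0+n},y_{k(T_0+n)})+|k(T_0+n)-k(T_0)-n|\le 3\delta+2\delta=5\delta.
\]

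The step I expect to be the real obstacle is the bookkeeping compressed into the last paragraph. First, one must check that the limit ray $\Gamma_1'$ genuinely converges to $\eta$ in the boundary topology — with the definition of $\partial X$ used here this is essentially the fellow-travelling statement, and it needs the thin-triangle estimate to be carried cleanly through the limit. Second, the reparametrisation $k$ has to be made honestly non-decreasing; this is what converts $\bigl|\,|k(T_0+n)-k(T_0)|-n\,\bigr|\le2\delta$ into $|k(T_0+n)-k(T_0)-n|\le2\delta$, and I would extract the monotonicity from the fact that $j\mapsto\tfrac{1}{2}\bigl(d(x_0,y_j)+j\bigr)$ is non-decreasing (consecutive values of $d(x_0,y_j)$ differ by at most $1$, while $j$ increases by $1$). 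Third, the $O(\delta)$ errors and off-by-ones must be tracked precisely enough for the final constants to be exactly $5\delta$ and $2(R+3\delta+2)$ rather than larger multiples. None of this needs an idea beyond $\delta$-thin triangles and the uniform Hausdorff bound, but it is where all the care resides.
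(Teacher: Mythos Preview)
Your approach is essentially the paper's: construct a limiting CGR $\Gamma_1'$ from $x_0$ to $\eta$ out of geodesic segments $[x_0,y_N]$, invoke part~\ref{lemma:item1} to compare $\Gamma_0$ with $\Gamma_1'$, and use thin triangles to show $\Gamma_1'$ $\delta$-fellow-travels $\Gamma_1$ beyond index $R+\delta$. The one difference is the final alignment bookkeeping you correctly flag as delicate: rather than forcing the reparametrisation $k$ to be monotone, the paper fixes initial offsets $R_0=R+\lceil\delta\rceil+1$ and $R_1\le 2(R+\delta+1)$ with $d(\Gamma_1'(R_0),y_{R_1})\le\delta$, and then shifts both by a further $k_0\le 4\delta+1$, chosen minimal so that for every $n\ge k_0$ the nearest $y$-index to $\Gamma_1'(R_0+n)$ already lies at or beyond $R_1$ --- this sidesteps the monotonicity question entirely and yields the constants $5\delta$ and $2(R+3\delta+2)$ directly.
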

\begin{proof}

     Item \ref{item:1} is exactly \cite{BH}*{Lemma III.3.3.(1)}.
    
     For item \ref{item:2}, the existence of $T_0, T_1 \ge 0$ such that $d (x_{T_0 + n}, y_{T_1+n}) \le 5\delta$ for every $n \in \Nb$ is granted by \cite{BH}*{Lemma III.3.3.(2)}. The upper bound $T_0, T_1 \le 2(d (x_0, y_0) + 3\delta + 2)$ does not appear explicitly in the lemma, but can be deduced from the proof therein.
    
    More precisely, let $c_n$ be a sequence of geodesic paths from
    $x_0$ to $y_n$. As $X$ is locally finite,
    there exists a subsequence of $(c_n)_{n \in \Nb}$ converging to some CGR $\Gamma$ whose starting point is $x_0$. Given $n \in \Nb$, consider the
    geodesic triangle whose edges are $c_0$, $c_n$ and the initial segment $(y_0,\dots, y_n)$ of $\Gamma_1$. By $\delta$-hyperbolicity, the path $c_n$ is contained in a $\delta$-neighborhood of $c_0 \cup \Gamma_1$. We claim that the vertex $c_n(m)$ belongs to a $\delta$-neighborhood of
    $\Gamma_1$ for every $m \ge d (x_0, y_0)+\delta +1$ and every $n$ for which $c_n(m)$ is defined. Indeed, if this were not the case,
    then $c_n(m)$ would belong to a $\delta$-neighborhood of $c_0$, hence
    giving
    \[
    d (c_n(m),x_0) \le d (x_0, y_0) + \delta < m,
    \]
    and contradicting the fact that
    $c_n$ is a geodesic path from $x_0$ to $c_n(m)$. This in particular
    implies that $\Gamma$ converges to $\eta$. A similar argument shows
    that if $R_0 = d (x_0, y_0) + \ceil{\delta} +1$ and $m \ge 2(d (x_0, y_0)+\delta +1)$, then
    $d (c_n(R_0), y_m) > \delta$.
    
    Summarizing, this means
    that there exists $R_1 \le 2(d (x_0, y_0) + \delta + 1)$ such that
    $d (\Gamma(R_0), y_{R_1}) \le \delta$. Notice that, given $n \in \Nb$, the
    vertex $\Gamma(R_0+n)$ could be in the $\delta$-neighborhood of a $y_{\ell}$ for $\ell < R_1$,
    but by the triangle inequality there exists a minimum $k_0 \in \Nb$ such that
    for every $n \ge k_0$ there is $n' \in \Nb$ satisfying
    \[
    d (\Gamma(R_0 + n), y_{R_1+n'}) \le \delta.
    \]
    Again by the triangle inequality it follows that $|n - n'| \le 2 \delta$,
    hence
    \[
    d (\Gamma(R_0 + n), y_{R_1+n}) \le 3\delta \text{ for every }n \ge k_0.
    \]
    Since both $\Gamma_0$ and $\Gamma$ belong to $\cgr(x_0, \eta)$, then
    $d (\Gamma(n),x_n) \le 2 \delta$ for every $n \in \Nb$
    by part \ref{lemma:item1}, thus we get
    \[
     d (x_{R_0+k_0+n}, y_{R_1+k_0+n}) \le 5\delta \text{ for every }n \in \Nb.
    \]
    
    We claim that $T_i = R_i + k_0$ satisfy the required bound.
    If $k_0 = 0$ there is nothing to prove since both $R_0$ and $R_1$
    are smaller than $2(d (x_0, y_0) + \delta + 1)$. If instead $k_0 >0$, let $m \in \Nb$
    be such that
    \[
    d (\Gamma(R_0 + k_0), y_{R_1+m}) \le \delta.
    \]
    The triangle inequality implies $|k_0 - m| \le 2 \delta$ and,
    as $\Gamma(R_0+ k_0-1)$ is in the $\delta$-neighborhood of some $y_s$ for $s <
    R_1$, it also implies $m \le 2\delta +1$. We can thus conclude that
    $k_0\le 4\delta +1$, and that $T_0$ and $T_1$ satisfy the required upper bound also when $k_0 > 0$.
\end{proof}

Given a finite path $\Gamma_0=(x_n)_{n < k}$ and a path $\Gamma_1 = (y_n)_{n \in I}$ in a graph $X = (V,\mathcal{E})$ such that $(x_{k-1},y_0) \in \mathcal{E}$, we denote by
$\Gamma_0 \cdot \Gamma_1$ the path which starts with $\Gamma_0$ and then continues along $\Gamma_1$.
\begin{lemma}
\label{lemma:SubstitutionInCGR}
Let $X$ be a graph,
let $\Gamma = \brak{x_n}_{n \in \Nb}$ be a CGR, let $n_1 > n_0 \in \Nb$ and let $\Gamma_{01}$ be any geodesic path from $x_{n_0}$ to $x_{n_1}$. Then $\brak{x_n}_{n<n_0}\cdot\Gamma_{01}\cdot\brak{x_n}_{n > n_1}$ is a CGR.
\end{lemma}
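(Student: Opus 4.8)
\emph{Proof proposal.} Write $\Gamma' = (z_m)_{m \in \Nb}$ for the concatenated path $(x_n)_{n<n_0}\cdot\Gamma_{01}\cdot(x_n)_{n > n_1}$. Since any infinite geodesic path is, by definition, a CGR from its initial vertex to its own asymptotic class, it suffices to show that $\Gamma'$ is an infinite geodesic path. The plan is first to get the indexing straight, and then to verify $d(z_m,z_{m'}) = |m-m'|$ by a short case analysis.

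For the indexing, I would note that because $\Gamma$ is geodesic we have $d(x_{n_0},x_{n_1}) = n_1 - n_0$, so the geodesic path $\Gamma_{01}$ from $x_{n_0}$ to $x_{n_1}$ has exactly $n_1 - n_0 + 1$ vertices. Hence $z_m = x_m$ for $0 \le m \le n_0$, the vertices of $\Gamma_{01}$ occupy positions $n_0, \dots, n_1$ (with $z_{n_0} = x_{n_0}$ and $z_{n_1} = x_{n_1}$), and $z_m = x_m$ again for $m \ge n_1$. In particular $z_0 = x_0$, and consecutive vertices are adjacent: inside each block this is clear, and at the two junctions it reduces to an edge of $\Gamma$. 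Thus $\Gamma'$ is a well-defined path indexed by $\Nb$, and $d(z_m, z_{m'}) \le |m - m'|$ automatically.

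The core step is the reverse inequality. I would split according to where $m < m'$ sit relative to the window $[n_0, n_1]$. If $m' \le n_0$, or $m \ge n_1$, or $m \le n_0 \le n_1 \le m'$, then both $z_m$ and $z_{m'}$ are vertices of $\Gamma$ and $d(z_m, z_{m'}) = m' - m$ because $\Gamma$ is geodesic; if $n_0 \le m \le m' \le n_1$ then both lie on $\Gamma_{01}$ and the bound follows since $\Gamma_{01}$ is geodesic. The only genuine cases are $m < n_0 < m' < n_1$ and $n_0 < m < n_1 < m'$, and they are symmetric. In the first, $z_{m'}$ is the vertex at position $m' - n_0$ on the geodesic $\Gamma_{01}$, so $d(z_{m'}, x_{n_1}) = n_1 - m'$; combining this with $d(x_m, x_{n_1}) = n_1 - m$ (geodesicity of $\Gamma$, as $m < n_1$) through the triangle inequality $d(x_m, x_{n_1}) \le d(x_m, z_{m'}) + d(z_{m'}, x_{n_1})$ yields $d(z_m, z_{m'}) \ge m' - m$. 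The second case is identical, anchoring at $x_{n_0}$ instead of $x_{n_1}$. Finally, since $z_m = x_m$ for all $m \ge n_1$, the rays $\Gamma'$ and $\Gamma$ eventually coincide, so they are asymptotic and $\Gamma'$ converges to the same boundary point as $\Gamma$ while starting at $x_0$. I do not expect a serious obstacle; the only point needing care is the bookkeeping in the two mixed cases, and the single idea making them work is that a vertex on a geodesic from $p$ to $q$ realizes $d(p,\cdot) + d(\cdot,q) = d(p,q)$, which lets the triangle inequality be applied in the useful direction.
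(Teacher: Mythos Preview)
Your proof is correct. The case analysis is clean, and the mixed cases are handled properly via the triangle inequality anchored at the endpoints $x_{n_0}$ or $x_{n_1}$.

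The paper takes a shorter, indirect route. Rather than verifying $d(z_m,z_{m'}) = |m-m'|$ for all pairs, it argues by contradiction: if the concatenated path $(y_n)_{n\in\Nb}$ failed to be geodesic, there would be some $k$ with $d(y_0,y_k) < k$, and then (since each step has length $\le 1$) $d(y_0,y_n) < n$ for every $n \ge k$ as well. But $y_0 = x_0$ and $y_n = x_n$ once $n > n_1$, so this would force $d(x_0,x_n) < n$ for all large $n$, contradicting that $\Gamma$ is geodesic. The asymptotic-class statement is then immediate for the same reason you give.

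The trade-off: the paper's argument is a two-line observation exploiting the tail agreement and the ``once a path falls behind, it stays behind'' propagation, whereas your approach verifies the geodesic condition directly and transparently, at the cost of a few more lines of casework. Both are perfectly adequate for a lemma of this size.
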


\begin{proof}
By assumption $\Gamma$ is a CGR, hence $d (x_0, x_{n-1}) = n-1$ for
every $n \in \Nb$.
Let $\brak{y_n}_{n \in \Nb} = \brak{x_n}_{n< n_1}\cdot\Gamma_{01}\cdot\brak{x_n}_{n > n_2}$ and suppose that it is not a CGR. Then there exists $N \in \Nb$ such that for all $n > N$ we have $d (y_0, y_{n-1}) < n-1$. However, $y_0 = x_0$ and $\Gamma_{01}$ is a geodesic path between $x_{n_0}$
and $x_{n_1}$,
hence $y_n = x_n$ for every $n > n_1$. This contradicts the fact that $\brak{x_n}_{n \in \Nb}$ is a CGR.
\end{proof}

\section{Borel Uniformizing Functions} \label{S.Borel}
Given a hyperbolic group $G$ with finite symmetric generating set $S$,
in this section we define a specific Borel injection which associates to each $\eta \in \partial G$ a sequence in $S^\Nb$. Let $\Gamma = (x_n)_{n \in \Nb} \in G^\Nb$ be a CGR in $\text{Cay}(G,S)$. Its \emph{type} $\text{typ}(\Gamma)$ is defined as
\[
\text{typ}(\Gamma) = (x_n^{-1} x_{n+1})_{n \in \Nb} \in S^\Nb.
\]
We use an analogous notation also for finite paths.

In what follows we consider $\partial G$ with its topology as Gromov boundary, while $S^\Nb$ and $G^\Nb$ are endowed with the product topology resulting from the
discrete topology on $S$ and $G$ respectively.

\begin{proposition}[\cite{MarSab}*{Lemma 5.1}]\label{prop:compact}
    The set $\cgr(e, \eta)$ is compact in $G^\Nb$.
\end{proposition}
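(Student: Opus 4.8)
The plan is to show $\cgr(e,\eta)$ is a closed subset of the compact space $S^\Nb$ (identified via the type map) or, equivalently, a closed and ``bounded'' subset of $G^\Nb$; since $\cgr(e,\eta) \subseteq \prod_{n} B(e,n)$ and each ball $B(e,n)$ is finite by local finiteness, Tychonoff already gives that this product is compact in $G^\Nb$, so the whole content is to verify that $\cgr(e,\eta)$ is closed in it. First I would take a sequence $\Gamma^{(k)} = (x^{(k)}_n)_n \in \cgr(e,\eta)$ converging to some $\Gamma = (x_n)_n \in G^\Nb$ in the product topology, which by definition of the product topology means that for every fixed $n$ we have $x^{(k)}_n = x_n$ for all large $k$ (the coordinates are eventually constant, since $G$ is discrete).

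The first thing to check is that $\Gamma$ is a geodesic path: fix $m,n$; for $k$ large both $x^{(k)}_m = x_m$ and $x^{(k)}_n = x_n$, and since $\Gamma^{(k)}$ is geodesic we get $d(x_m,x_n) = d(x^{(k)}_m, x^{(k)}_n) = |m-n|$, so $\Gamma$ is geodesic; in particular it is an infinite geodesic ray and $x_0 = e$. It remains to check that $\Gamma$ converges to $\eta$, i.e.\ that the equivalence class of $\Gamma$ in $\partial G$ is $\eta$. The clean way is to invoke Lemma \ref{lemma:TubularLemma}\ref{lemma:item1}: fix any $\Gamma' = (y_n)_n \in \cgr(e,\eta)$; for each $k$, both $\Gamma^{(k)}$ and $\Gamma'$ start at $e$ and converge to $\eta$, so $d(x^{(k)}_n, y_n) \le 2\delta$ for every $n$. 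Passing to the limit (for each fixed $n$, using $x^{(k)}_n = x_n$ for large $k$) gives $d(x_n, y_n) \le 2\delta$ for every $n$, so the Hausdorff distance between $\Gamma$ and $\Gamma'$ is at most $2\delta < \infty$, hence $\Gamma$ and $\Gamma'$ are asymptotic and $\Gamma$ converges to $\eta$. Therefore $\Gamma \in \cgr(e,\eta)$, so $\cgr(e,\eta)$ is closed.

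Finally I would combine this with compactness of the ambient space: every $\Gamma = (x_n)_n \in \cgr(e,\eta)$ satisfies $x_n \in B(e,n)$ (since $d(e,x_n) = n$ by the geodesic condition), and each $B(e,n)$ is a finite subset of $G$ by local finiteness of $\mathrm{Cay}(G,S)$; hence $\cgr(e,\eta) \subseteq \prod_{n\in\Nb} B(e,n)$, which is compact by Tychonoff's theorem. A closed subset of a compact space is compact, so $\cgr(e,\eta)$ is compact in $G^\Nb$, as claimed. The only mild subtlety — not really an obstacle — is making sure the convergence in the product topology is used correctly: since $G$ carries the discrete topology, convergence $\Gamma^{(k)} \to \Gamma$ means each coordinate stabilises, which is exactly what makes the limiting arguments above immediate; no diagonal extraction is needed because we are proving closedness, not sequential compactness from scratch.
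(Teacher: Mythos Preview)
Your proof is correct and follows essentially the same approach as the paper: both use local finiteness of $\mathrm{Cay}(G,S)$ to obtain compactness of the ambient space (equivalently, existence of subsequential limits) and then invoke Lemma~\ref{lemma:TubularLemma}\ref{lemma:item1} to verify that the limiting ray still converges to $\eta$. The only difference is organizational---you phrase it as ``closed subset of a Tychonoff product'' while the paper argues sequential compactness directly---but the content is identical.
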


\begin{proof}
    As $\text{Cay}(G,S)$ is locally finite, any sequence $\brak{\Gamma_n}_{n\in \Nb} \subseteq \cgr(e, \eta)$ has a subsequence converging to some geodesic ray $\Gamma$. It follows from Lemma \ref{lemma:TubularLemma} that $d(\Gamma(m), \Gamma_0(m)) < 2\delta$ for all $m \in \Nb$ which shows that $\Gamma$ converges to $\eta$.
\end{proof}

\begin{definition}\label{def:Gamma_eta}
    Let $G$ be a hyperbolic group with finite symmetric generating set $S$. Fix a total order on $S$ and let $\le_\lex$ be the resulting lexicographical order on $S^\Nb$. Consider the map
\begin{align*}
    \Phi \colon \partial G &\to S^\Nb \\
\eta &\mapsto \sigma_\eta \nonumber
\end{align*}
where $\sigma_\eta$ is the unique element in $S^\Nb$ such that
\[
\sigma_\eta = \min\nolimits_{\le_\lex}\{\text{typ}(\Gamma) \mid \Gamma \in \cgr(e,\eta) \},
\]
with $e$ being the identity element in $G$. The existence of such minimal element follows from Proposition \ref{prop:compact}.
\end{definition}

\begin{proposition} \label{prop:Borel}
    The function $\Phi \colon \partial G \to S^\Nb$ in Definition
    \ref{def:Gamma_eta} is a Borel injection.
\end{proposition}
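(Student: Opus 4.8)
The plan is to prove injectivity and Borel measurability separately. For injectivity, suppose $\eta \neq \eta'$ in $\partial G$. Then no CGR starting at $e$ converging to $\eta$ can be asymptotic to one converging to $\eta'$, so in particular their types differ; since $\sigma_\eta = \operatorname{typ}(\Gamma)$ for some $\Gamma \in \cgr(e,\eta)$ and $\sigma_{\eta'} = \operatorname{typ}(\Gamma')$ for some $\Gamma' \in \cgr(e,\eta')$, and $\Gamma, \Gamma'$ determine distinct points of the boundary, we get $\sigma_\eta \neq \sigma_{\eta'}$. (Concretely: a type in $S^\Nb$ together with the basepoint $e$ recovers the CGR as a sequence of vertices via partial products, and the CGR determines its limit point in $\partial G$, so $\operatorname{typ}$ is injective on the set of all CGRs starting at $e$; hence $\Phi$ is injective.)

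For Borel measurability, I would show that $\Phi$ is in fact continuous, which is stronger and suffices. The key point is that the assignment $\eta \mapsto \cgr(e,\eta)$ is "upper semicontinuous" in the appropriate sense: if $\eta_k \to \eta$ in $\partial G$, and $\Gamma_k \in \cgr(e,\eta_k)$ realize the minimal types $\sigma_{\eta_k}$, then by local finiteness of $\operatorname{Cay}(G,S)$ the sequence $(\Gamma_k)$ subconverges in $G^\Nb$ to some geodesic ray $\Gamma$ starting at $e$, and by the definition of the Gromov boundary topology (together with Lemma \ref{lemma:TubularLemma}\ref{lemma:item1}, exactly as in the proof of Proposition \ref{prop:compact}) any such limit $\Gamma$ lies in $\cgr(e,\eta)$. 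Now I would argue that the limit type $\operatorname{typ}(\Gamma) = \lim_k \sigma_{\eta_k}$ (along the subsequence) must equal $\sigma_\eta$: it cannot be lexicographically smaller, since $\operatorname{typ}(\Gamma) \in \{\operatorname{typ}(\Gamma') : \Gamma' \in \cgr(e,\eta)\}$ and $\sigma_\eta$ is the minimum of that set; and it cannot be strictly larger either, because if $\sigma_\eta <_\lex \operatorname{typ}(\Gamma)$, say they first differ at coordinate $m$, then fixing any $\Gamma'' \in \cgr(e,\eta)$ with $\operatorname{typ}(\Gamma'') = \sigma_\eta$ and using Lemma \ref{lemma:TubularLemma}\ref{lemma:item1} (which bounds $d(\Gamma''(n), \Gamma'(n)) \le 2\delta$ for every CGR $\Gamma'$ from $e$ to $\eta$), one can perturb the initial segment of $\Gamma_k$ (for $k$ large, so that $\eta_k$ is close to $\eta$ and $\Gamma_k$ agrees with $\Gamma$ on a long prefix) by splicing in a geodesic from $e$ to $\Gamma_k(N)$ whose first $m$ letters are those of $\sigma_\eta$, contradicting minimality of $\operatorname{typ}(\Gamma_k) = \sigma_{\eta_k}$. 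Hence $\sigma_{\eta_k} \to \sigma_\eta$ along every subsequence, so $\Phi(\eta_k) \to \Phi(\eta)$ and $\Phi$ is continuous.

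The main obstacle I anticipate is the last step: making precise the claim that the minimal type varies semicontinuously, i.e. that a boundary point close to $\eta$ cannot have a minimal type that is much smaller than $\sigma_\eta$. This requires carefully transferring, via the splicing operation of Lemma \ref{lemma:SubstitutionInCGR} and the fellow-traveling bound of Lemma \ref{lemma:TubularLemma}, an initial segment that is optimal for $\eta$ into an initial segment that is admissible (though not yet shown optimal) for nearby $\eta_k$, and then invoking minimality of $\sigma_{\eta_k}$ in the other direction. If carrying out the continuity argument in full proves delicate, a safe fallback is to prove Borelness directly: for each finite word $w \in S^{<\Nb}$ the set $\{\eta : w \text{ is a prefix of } \sigma_\eta\}$ can be described by a countable combination of closed/open conditions on the (compact, hence nicely parametrized) sets $\cgr(e,\eta)$, which is enough since these prefix sets generate the Borel $\sigma$-algebra of $S^\Nb$.
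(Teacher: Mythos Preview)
Your injectivity argument is correct and matches the paper's: a type together with the basepoint $e$ recovers the ray, and the ray determines its boundary point.

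The continuity argument, however, has a genuine gap in the ``cannot be strictly larger'' direction. Your splicing step would require that for large $k$ there is a geodesic from $e$ to $\Gamma_k(N)$ passing through $\Gamma''(m)$, i.e.\ that $d(\Gamma''(m),\Gamma_k(N))=N-m$. But from $d(\Gamma''(N),\Gamma(N))\le 2\delta$ (Lemma~\ref{lemma:TubularLemma}\ref{lemma:item1}) and $\Gamma_k(N)=\Gamma(N)$ for large $k$, you only get
\[
N-m \;\le\; d(\Gamma''(m),\Gamma_k(N)) \;\le\; N-m+2\delta,
\]
so the concatenation need not be a geodesic and Lemma~\ref{lemma:SubstitutionInCGR} does not apply. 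Phrased differently, the set of $\zeta\in\partial G$ admitting a CGR from $e$ through the fixed vertex $\Gamma''(m)$ is the image of a clopen set of rays under the continuous ray-to-endpoint map, hence closed but not obviously open; there is no reason it should contain a neighbourhood of $\eta$. What your argument does establish cleanly is one-sided semicontinuity (every subsequential limit of $\sigma_{\eta_k}$ is $\ge_\lex \sigma_\eta$), but this alone does not give continuity, and in fact $\Phi$ may well fail to be continuous for general hyperbolic $G$.

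Your fallback is the right idea and, once made precise, essentially yields the paper's proof. The condition ``$w$ is a prefix of $\sigma_\eta$'' unpacks as: $\cgr(e,\eta)$ meets the clopen cylinder $U_w\subseteq G^{\Nb}$ of rays with type-prefix $w$, and misses $U_{w'}$ for every $w'<_\lex w$ of the same length. Since $A=\{(\eta,\Gamma):\Gamma\in\cgr(e,\eta)\}$ is closed with compact vertical sections (the paper quotes \cite{MarSab}*{Claim~6.3} and Proposition~\ref{prop:compact}), each set $\{\eta:\cgr(e,\eta)\cap U_w\neq\emptyset\}$ is closed, and the prefix sets are Borel. The paper packages exactly this via the hyperspace $K(G^{\Nb})$: it shows $\Theta\colon\eta\mapsto\cgr(e,\eta)$ is Borel by the compact-section selection theorem \cite{Kech}*{Theorem~28.8}, shows the lex-min-of-types map $\Psi\colon K(G^{\Nb})\to S^{\Nb}$ is continuous, and concludes $\Phi=\Psi\circ\Theta$ is Borel. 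This route avoids any continuity claim for $\Phi$ itself.
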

\begin{proof}
    Consider the set
    \[
    A = \{ (\eta,  \Gamma) \in \partial G \times G^\Nb \mid \Gamma \in \text{CGR}
    (e, \eta) \}.
    \]
    The proof of \cite{MarSab}*{Claim 6.3} shows that $A$ is closed in $\partial G \times G^\Nb$.  
    Moreover, given $\eta \in \partial G$, the section
    \[
    A_\eta = \{ \Gamma \in G^\Nb \mid (\eta, \Gamma) \in A \}
    \]
    is precisely $\cgr(e,\eta)$ which is a compact subset of $G^\Nb$ by Proposition \ref{prop:compact}. Then by \cite{Kech}*{Theorem 28.8}
    the map
    \begin{align*}
        \Theta \colon \partial G &\to K(G^\Nb) \\
        \eta &\mapsto \cgr(e,\eta)
    \end{align*}
    is Borel, where $K(G^\Nb)$ is the space of compact
    subsets if $G^\Nb$ with the Vietoris topology (see \cite[\S I.4.F]{Kech}). Consider next the function
    \begin{align*}
        \Psi \colon K(G^\Nb) &\to S^\Nb \\
        F &\mapsto \min\nolimits_{\le_\lex} \{ \text{typ}(\Gamma) \mid \Gamma \in F \}.
    \end{align*}
    We claim that also $\Psi$ is Borel. Indeed,
    let $U$ be the basic open set in $S^\Nb$ consisting of all sequences starting with some
$(s_0, \dots, s_{k-1}) \in S^k$. Let $U'$ be the open set in $G^\Nb$
of all elements starting with $(x_0, \dots, x_{k-1})$, where $x_n = s_0 s_1\dots s_{n}$ for every $n < k$. Given $\bar y = (y_0, \dots, y_{k-1}) \in G^k$ denote the corresponding basic open set in $G^\Nb$ as
\[
V_{\bar y} = \{ (z_n)_{n \in \Nb} \in G^\Nb \mid z_n = y_n \text{ for all } n < k\}.
\]
Let $C = \{ \bar y \in G^k \mid (s_0, \dots, s_{k-1}) \le_\lex \text{typ}(\bar y) \}$ and consider the open set
\[
V=\bigcup_{ \bar y \in C} V_{\bar y}.
\]

The preimage $\Psi^{-1}(U)$ consists of all $F \in K(G^\Nb)$
which intersect $U'$ and which are contained in $V$, and it is therefore
and open set in the Vietoris topology of $K(G^\Nb)$. We can thus conclude that $\Phi = \Psi \circ \Theta$ is Borel.

Injectivity of $\Phi$ follows since any two CGR which start from the same point and have the same type must be equal.
\end{proof}

\section{Hyperfiniteness and Borel Asymptotic Dimension} \label{S.main}

This section is devoted to the proof of Theorem \ref{thm:main}, which covers both Theorem \ref{theorem:HF} and \ref{theorem:asdim}. We begin with
some preliminaries on Borel equivalence relations and by recalling
the notion of Borel asymptotic dimension as introduced in \cite{ConJacMarSewTuc20}.

An \emph{extended metric} $\rho$ on a set $X$ is a metric on $X$ which can also take
value $\infty$. For $x \in X$ and $r > 0$, let $B_\rho(x,r)$ denote
the $\rho$-ball of radius $r$ centered in $x$.
Let $E_\rho$ be the equivalence relation of finite distance between points
\[
E_\rho = \{(x,y) \in X \times X \mid \rho(x,y) < \infty \}.
\]

Given a standard Borel space $X$, an equivalence relation $E$ on $X$ is
\emph{Borel} if $E$ is a Borel subset of $X \times X$. A Borel equivalence
relation $E$ is \emph{finite} (respectively \emph{countable}) if all its equivalence classes are finite (respectively countable),
and it is \emph{hyperfinite} if it can be written as a union
of an increasing sequence of finite Borel equivalence relations.

A \emph{Borel extended metric space} $(X,\rho)$ is a standard Borel space
paired with a Borel extended metric $\rho$. In this case $E_\rho$ is a Borel
equivalence relation on $X$. An equivalence relation $E$ on a Borel extended metric space $(X,\rho)$ is \emph{uniformly bounded} if there exists a uniform finite bound
on the $\rho$-diameter of the $E$-classes.
A graph $X=(V,\cE)$
is a \emph{Borel graph} if $V$ is a standard Borel space and the edge relation
$\cE$ is a Borel subset of $V \times V$. Note that the graph metric endows $V$
with a structure of Borel extended metric space.

We will mainly be concerned with two types of Borel graphs.
The first one is
the \emph{Schreier graph} of a Borel action of a finitely generated group $G$
on a standard Borel space $X$. Given a finite symmetric generating set $S$
of $G$, the Schreier graph of the action $G \curvearrowright X$
has $X$ as vertex set, and two points $x,y \in X$ are adjacent
if and only if there is $g \in S$ such that $g x = y$. We remark that all vertices of such graph have degree at most $|S|$, hence for every $r > 0$
all balls of radius $r$ (in the graph metric) have size at most $|S|^r$.

The \emph{orbit
equivalence relation} $E_G^X$ on $X$ is defined as
\[
x E_G^X y \iff \exists g \in G \text{ s.t. } g x = y, \text{ for } x, y \in X.
\]
Note that the orbit equivalence relation is the same as the finite-distance equivalence relation induced by the graph metric on $X$.

The second kind of Borel graphs we focus on arises from Borel
functions. Let $X$ be a standard Borel space and let $f \colon X \to X$
be a Borel function. We let $\mathcal{G}_f$ be the Borel graph whose
vertex set is $X$, and where $x,y \in X$ are adjacent if and only if
$f(x) = y$ or $f(y) = x$. We denote the graph metric on $\mathcal{G}_f$ by $\rho_f$. The typical example we shall consider is
the left shift on $X = S^\Nb$ for a finite set $S$
\begin{align} \label{eq:fshift}
    s \colon S^\Nb &\to S^\Nb \\
    (x_{n})_{n \in \Nb} &\mapsto (x_{n+1})_{n \in \Nb} \nonumber
\end{align}
In this case the finite-distance equivalence relation $E_{\rho_s}$
corresponds to the \emph{tail equivalence} $R$ on sequences in $S$, that
is for $\sigma_0 = (x_n)_{n \in \Nb}, \sigma_1 = (y_n)_{n \in \Nb} \in S^\Nb$
\[
\sigma_0 R \sigma_1 \iff \exists T_0, T_1 \in \Nb \text{ s.t. } x_{T_0 + n} = y_{T_1+n}, \text{ for every } n \in\Nb.
\]

Finally, we recall the notion of Borel asymptotic dimension.
\begin{definition}[{\cite[Definition 3.2]{ConJacMarSewTuc20}}] \label{def:Basdim}
    Let $(X,\rho)$ be a Borel extended metric space such that $E_\rho$
    is countable. The \emph{Borel
    asymptotic dimension} $\asdim(X,\rho)$ of $X$ is the smallest $d \in \Nb$
    such that for every $r > 0$ there exists a 
    uniformly bounded Borel equivalence relation $E$ on $X$ such that for
    every $x \in X$ the ball $B_\rho(x,r)$ intersects at most $d+1$ classes of $E$,
    and it is $\infty$ if no such $d$ exists.
\end{definition}

If $G\curvearrowright X$ is a Borel action, then we denote the Borel
    asymptotic dimension of the corresponding Schreier graph simply as
    $\asdim(G\curvearrowright X)$ (note that it does not depend on the choice of the generating set $S$ of $G$ inducing the metric on $X$ by \cite[Lemma 2.2]{ConJacMarSewTuc20}).

The main goal of this section is proving the following theorem.

\begin{theorem} \label{thm:main}
Let $G$ be a hyperbolic group with finite symmetric generating set $S$.
\begin{enumerate}[label=(\arabic*)]
    \item \label{item:1} The orbit equivalence relation $E^{\partial G}_G$ induced by the boundary action $G \curvearrowright \partial G$ is hyperfinite.
    \item
    \label{item:2} Let $\delta > 0$ be such that $\text{Cay}(G,S)$ is
    $\delta$-hyperbolic and let $M_{5\delta} \in \Nb$ be a bound on the size of all balls of radius $5 \delta$ in $\text{Cay}(G,S)$.
    Then $\asdim(G \curvearrowright \partial G) \le 2 M_{5\delta}^2 -1$.
\end{enumerate}
\end{theorem}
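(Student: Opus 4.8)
The plan is to prove both items of Theorem~\ref{thm:main} by transporting structure along the Borel injection $\Phi \colon \partial G \to S^\Nb$ from Definition~\ref{def:Gamma_eta}, using the tail equivalence $R$ on $S^\Nb$ as a model. The central technical fact, which the introduction flags as Lemma~\ref{lemma:bound}, is that $\Phi$ is ``almost equivariant'' at the level of equivalence relations: if $\eta, \xi \in \partial G$ lie in the same $G$-orbit, then $\Phi(\eta)$ and $\Phi(\xi)$ are tail-equivalent, and moreover $E^{\partial G}_G$ pulled back through $\Phi$ sits inside $R$ but contains only boundedly many $R$-classes per $E^{\partial G}_G$-class. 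I would first establish this. The key geometric input is Lemma~\ref{lemma:TubularLemma}: if $\Gamma_0 \in \cgr(e,\eta)$ realises $\sigma_\eta = \Phi(\eta)$ and $g \in G$ maps $\eta$ to $\xi$, then $g\Gamma_0$ is a CGR from $g$ to $\xi$; after translating its start to $e$ and comparing with a minimal-type CGR $\Gamma_1 \in \cgr(e,\xi)$, part~(2) gives $T_0, T_1 \le 2(d(e,g) + 3\delta + 2)$ with $d(g\Gamma_0(T_0 + n), \Gamma_1(T_1+n)) \le 5\delta$ for all $n$. Since both rays converge to $\xi$ and agree (up to the bound $5\delta$) from that index on, and since $\Gamma_1$ has \emph{minimal} type, one uses Lemma~\ref{lemma:SubstitutionInCGR} to splice: replace an initial segment of $\Gamma_1$ by a geodesic reaching the point $g\Gamma_0(T_0)$ (which is within $5\delta$), producing another CGR in $\cgr(e,\xi)$; minimality of $\sigma_\xi = \Phi(\xi)$ then forces the tails of $\Phi(\eta)$ and $\Phi(\xi)$ to coincide. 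This shows $\Phi(\eta) \mathbin{R} \Phi(\xi)$, and the bound on the shifts $T_0,T_1$ shows that the number of tail-classes appearing in the image of a single orbit is finite.

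Granting the above, item~\ref{item:1} is immediate: $\Phi$ is a Borel injection that reduces $E^{\partial G}_G$ to an equivalence relation $F$ on $S^\Nb$ with $F \subseteq R$ and every $R$-class contained in finitely many $F$-classes — equivalently, $E^{\partial G}_G$ Borel-reduces to a relation that is ``finite-to-one over'' the hyperfinite relation $R$. Since $R$ is hyperfinite (being the tail equivalence on $S^\Nb$, \cite{DJK}*{Corollary 8.2}, or via $\asdim(S^\Nb,\rho_s)=1$ from \cite{ConJacMarSewTuc20}*{Lemma 8.3}), and hyperfiniteness is closed under such ``bounded index'' enlargements by \cite{JKL}*{Proposition 1.3(vii)}, $E^{\partial G}_G$ is hyperfinite.

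For item~\ref{item:2} I would run the Borel asymptotic dimension argument directly on $\partial G$, but calibrated by the same geometric estimate. Fix $r > 0$. On $S^\Nb$, the proof that $\asdim(S^\Nb, \rho_s) = 1$ produces, for each $r$, a uniformly bounded Borel equivalence relation $E_r$ on $S^\Nb$ meeting every $\rho_s$-ball of radius $r$ in at most $2$ classes; concretely $E_r$ groups sequences by agreement of a suitable window determined by an integer statistic of the sequence. I would pull this structure back along $\Phi$ and combine it with the metric comparison: a point of the Schreier graph ball $B(\eta, r)$ in $\partial G$ is $g\eta$ for $|g| \le r$, and by the tubular estimate $\Phi(g\eta)$ differs from a bounded shift of $\Phi(\eta)$ only on an initial segment whose length is controlled by $d(e,g)+5\delta$ and whose ``spread'' across nearby geodesic points is governed by $M_{5\delta}$ (the number of vertices in a $5\delta$-ball). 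Counting: the $5\delta$-fellow-travelling means each $g$ with $|g|\le r$ contributes a geodesic ray that, from a uniformly bounded index onward, passes within $5\delta$ of the minimal ray for $\eta$, and there are at most $M_{5\delta}$ choices at each relevant index and at most $M_{5\delta}$ choices of ``offset'', so $B(\eta,r)$ meets at most $M_{5\delta}^2$ classes of one part and, taking the natural two-colouring refinement as in the $S^\Nb$ case, at most $2M_{5\delta}^2$ classes; Borelness and uniform boundedness of the relation are inherited from the $S^\Nb$ construction via the Borel map $\Phi$ and the fact that $\Phi$ is injective with Borel image (so its partial inverse is Borel). Hence $\asdim(G\curvearrowright\partial G) \le 2M_{5\delta}^2 - 1$.

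The main obstacle is the splicing/minimality argument inside Lemma~\ref{lemma:bound}: one must verify carefully that replacing an initial segment of the minimal-type ray $\Gamma_1$ by a geodesic to a nearby vertex $g\Gamma_0(T_0)$ yields a \emph{legitimate} CGR (this is where Lemma~\ref{lemma:SubstitutionInCGR} enters, applied to the point where the two rays become $5\delta$-close), and then that minimality of $\sigma_\xi$ genuinely pins down its tail rather than merely its eventual ``shape'' — the subtlety being that $5\delta$-fellow-travelling of two geodesics does not make their types equal, so one needs that among all rays $5\delta$-fellow-travelling a fixed ray and converging to $\xi$, only finitely many \emph{types} occur (again a local-finiteness count bounded by $M_{5\delta}$), and that the lexicographically minimal one is the one $\Phi(\xi)$ selects. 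Pinning down the exact constants $2M_{5\delta}^2 - 1$ and the bounded-index constant for part~(1) is then a matter of bookkeeping with these counts.
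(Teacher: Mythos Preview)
Your central assertion---that orbit-equivalent points $\eta,\xi\in\partial G$ always have tail-equivalent images $\Phi(\eta),\Phi(\xi)$---is not established by your splicing argument and is in fact not what Lemma~\ref{lemma:bound} says. Splicing a geodesic to $g\Gamma_0(T_0)$ into $\Gamma_1$ and invoking lexicographic minimality of $\sigma_\xi$ only yields $\sigma_\xi \le_{\lex} \text{typ}(\text{spliced ray})$; it does not force the \emph{tails} of $\sigma_\eta$ and $\sigma_\xi$ to coincide, because two $5\delta$-fellow-travelling geodesic rays to the same boundary point can have types that differ infinitely often. The correct containment is the reverse of what you wrote: if $\sigma_\eta R\sigma_\zeta$ then $\eta,\zeta$ lie in the same orbit (this is the easy direction, Claim~\ref{claim:bound} in the paper, and it is precisely what gives uniform boundedness of the pulled-back relation in item~\ref{item:2}---a point your sketch does not address). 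Thus $R_0=\Phi^{-1}(R)\subseteq E^{\partial G}_G$, not the other way round, and what one needs is that each $E^{\partial G}_G$-class is a union of at most $(M_{5\delta})^2$ classes of $R_0$; hyperfiniteness then follows from \cite{JKL}*{Proposition~1.3(vii)} as you indicate.

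The paper's mechanism for this bound is different from your splice-and-compare-with-$\Gamma_1$ idea. One translates $\Gamma_\zeta$ (not $\Gamma_\eta$) by $g_\zeta$ with $g_\zeta\zeta=\eta$, observes that $g_\zeta\Gamma_\zeta$ is the minimal-type element of $\cgr(g_\zeta,\eta)$, and then uses Lemma~\ref{lemma:SubstitutionInCGR} to deduce that every finite segment of $g_\zeta\Gamma_\zeta$ is the minimal-type geodesic between its own endpoints. Since those endpoints (at indices $T_\zeta$ and $T_\zeta+N$) lie in $5\delta$-balls around fixed points $x_T$ and $x_{T+N}$ of $\Gamma_\eta$, the tail $\text{typ}((y_{T_\zeta+n})_{n\ge 0})$ is determined by a pair of points each ranging over a set of size $\le M_{5\delta}$, giving the $(M_{5\delta})^2$ bound together with the $\rho_s$-diameter estimate $8(r+3\delta+2)$. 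Your final paragraph gestures at this local-finiteness count, but the earlier paragraphs set up the wrong reduction; as written, the argument for item~\ref{item:1} does not go through, and for item~\ref{item:2} you are missing both the metric inequality $\rho\le\rho_s\circ\Phi$ and the correct use of the partition from Lemma~\ref{lemma:bound} (each of the $(M_{5\delta})^2$ pieces sits in a $\rho_s$-ball of radius $T$, hence meets at most two classes of the $S^\Nb$ cover).
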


The main motivation behind Definition \ref{def:Basdim} is \cite[Theorem 1.7]{ConJacMarSewTuc20}, stating that
if a Borel extended metric space $(X,\rho)$ has finite Borel asymptotic dimension, then the equivalence relation $E_\rho$ is hyperfinite. In view of this, it would be enough to prove
item \ref{item:2} to also obtain part \ref{item:1}. 
We point out, however, that a small modification of the methods employed
to prove the second statement of Theorem \ref{thm:main} can be used
to show that $E_G^{\partial G}$ is hyperfinite, without having to resort
to the theory developed in \cite{ConJacMarSewTuc20}. We will include this more direct proof of the first part of Theorem \ref{thm:main} for the reader's convenience.

Let us first consider Theorem \ref{thm:main} in the case where $G = \mathbb{F}_2$, the free group with two generators $a,b$. Let $S = \{a,b,a^{-1}, b^{-1} \}$ and let $s \colon S^\Nb \to S^\Nb$ be the left shift on $S^\Nb$.
Each $\eta \in \partial \mathbb{F}_2$
has a unique representative in $\cgr(e,\eta)$, and this gives a Borel injection $\Phi: \eta \mapsto \sigma_\eta$ from $\partial \mathbb{F}_2$ to $S^\Nb$ such that
\begin{equation} \label{eq:shift}
\text{either} \ \sigma_{g\eta} =
s(\sigma_\eta) \ \text{or} \ \sigma_{\eta} =
s(\sigma_{g\eta}) \text{ for every } \eta \in \partial \mathbb{F}_2 \text{ and } g \in S.
\end{equation}
It follows that $\Phi$ maps the Shreier graph metric on $\partial \mathbb{F}_2$ to the metric $\rho_s$ on $S^\Nb$. Thanks to this, the analysis of the orbit equivalence relation on $\partial\mathbb{F}_2$
is equivalent to that of the tail equivalence relation on $S^\Nb$, hence item \ref{item:1} of Theorem \ref{thm:main} directly follows from
\cite[Corollary 8.2]{DJK} and item \ref{item:2} from \cite[Lemma 8.3]{ConJacMarSewTuc20}. Our strategy to prove Theorem \ref{thm:main} is to use the Borel injection $\Phi$ from Definition \ref{def:Gamma_eta} to have a similar reduction to the tail equivalence relation
on $S^\Nb$. The situation is however more chaotic for a general hyperbolic group, since we cannot expect for an equality like the one in \eqref{eq:shift} to hold.
Nevertheless, hyperbolicity grants a sufficient control of the tail equivalence classes of $\sigma_{g\eta}$, as $g$ varies in a given bounded ball around the identity of $G$,
to fill this gap.

This will be made precise in the following lemma, for which we introduce some
notation. Let $G$ be a hyperbolic group with finite symmetric generating set $S$,
let $\delta > 0$ be such that $\text{Cay}(G,S)$ is
    $\delta$-hyperbolic and let $M_{5\delta} \in \Nb$ be an upper bound for the cardinality of the balls of radius $5 \delta$ in $\text{Cay}(G,S)$. 

\begin{lemma} \label{lemma:bound}
Let $G$ be a hyperbolic group with finite symmetric generating set $S$.
Fix $\eta \in \partial G$,
    $r > 0$, let $\rho$ be the graph metric on $\partial G$ induced by the action $G \curvearrowright \partial G$, and let $\rho_s$ be the extended
    metric on $S^\Nb$ induced by the left shift $s \colon S^\Nb \to S^\Nb$. Finally, let $\{\sigma_\zeta \}_{\zeta \in B_\rho(\eta, r)} \subseteq S^\Nb$ be the family of types of CGR given in Definition \ref{def:Gamma_eta}.
    Then $\{\sigma_\zeta \}_{\zeta \in B_\rho(\eta, r)}$ can be partitioned into at most $(M_{5\delta})^2$ sets, each one with $\rho_s$-diameter smaller than $8(r + 3\delta + 2)$.
    In particular, the set $\{\sigma_\zeta \}_{\zeta \in B_\rho(\eta, r)}$ meets
    at most $(M_{5\delta})^2$ tail equivalence classes of $S^\Nb$.
\end{lemma}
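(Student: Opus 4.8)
I would argue as follows.

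The plan is to compare each $\sigma_\zeta$, for $\zeta\in B_\rho(\eta,r)$, with the fixed sequence $\sigma_\eta$ by transporting across $G$ the CGR that realizes $\sigma_\eta$. Let $\Gamma_\eta=(x_n)_{n\in\Nb}$ be the CGR from $e$ with $\text{typ}(\Gamma_\eta)=\sigma_\eta$, and for $\zeta\in B_\rho(\eta,r)$ let $\Gamma_\zeta=(y^\zeta_n)_{n\in\Nb}$ be the CGR from $e$ with $\text{typ}(\Gamma_\zeta)=\sigma_\zeta$ (recall a CGR is determined by its starting point and type). Fix $g_\zeta\in G$ with $g_\zeta\eta=\zeta$ and $d(e,g_\zeta)=\rho(\eta,\zeta)\le r$. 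Since $G$ acts on the Cayley graph by left multiplication, the translate $g_\zeta\Gamma_\eta=(g_\zeta x_n)_{n\in\Nb}$ is a CGR from $g_\zeta$ to $\zeta$ with $\text{typ}(g_\zeta\Gamma_\eta)=\sigma_\eta$. Applying the second part of Lemma \ref{lemma:TubularLemma} to the two CGR $\Gamma_\zeta$ and $g_\zeta\Gamma_\eta$, both converging to $\zeta$, and using $d(y^\zeta_0,g_\zeta x_0)=d(e,g_\zeta)\le r$, I get $T^\zeta_0,T^\zeta_1\le 2(r+3\delta+2)$ with
\[
d\bigl(y^\zeta_{T^\zeta_0+n},\,g_\zeta x_{T^\zeta_1+n}\bigr)\le 5\delta \qquad\text{for every }n\in\Nb .
\]

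Next I would set up the pigeonhole. For each $\zeta$ the vertices $y^\zeta_{T^\zeta_0}$ and $g_\zeta x_{T^\zeta_1}$ are within $5\delta$, so the relative position of $\Gamma_\zeta$ against $g_\zeta\Gamma_\eta$ at the matching point is recorded by a pair of elements of the ball of radius $5\delta$ about $e$, a set of cardinality at most $M_{5\delta}$; call the resulting label $\ell(\zeta)$, so $\ell$ takes at most $(M_{5\delta})^2$ values. Partition $\{\sigma_\zeta\}_{\zeta\in B_\rho(\eta,r)}$ into the (at most $(M_{5\delta})^2$) classes on which $\ell$ is constant. It then remains to bound the $\rho_s$-diameter of each such class by $8(r+3\delta+2)$.

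The heart of the matter — and the step I expect to be the main obstacle — is to upgrade the coarse estimate above into an honest equality of tails of the sequences $\sigma_\zeta$. Fix $\zeta,\zeta'$ with $\ell(\zeta)=\ell(\zeta')$. The two reference rays $g_\zeta\Gamma_\eta$ and $g_{\zeta'}\Gamma_\eta$ are exact left-translates of one another, namely $g_\zeta\Gamma_\eta=(g_\zeta g_{\zeta'}^{-1})\cdot g_{\zeta'}\Gamma_\eta$, so the geodesic rays $\Gamma_\zeta$ and $(g_\zeta g_{\zeta'}^{-1})\Gamma_{\zeta'}$ both $5\delta$-fellow-travel the single ray $g_\zeta\Gamma_\eta$, and $\ell(\zeta)=\ell(\zeta')$ says they are pinned against it in the same way at the matching points. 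I claim that lexicographic minimality of $\Gamma_\zeta$ and of $\Gamma_{\zeta'}$ forces these two rays to coincide past those points: if they first diverged at some vertex, one could reroute the ray carrying the lexicographically larger continuation along the other one and, repairing the resulting path into a genuine CGR from $e$ by Lemma \ref{lemma:SubstitutionInCGR}, produce a CGR from $e$ to $\zeta$ (resp.\ to $\zeta'$) of type strictly smaller than $\sigma_\zeta$ (resp.\ $\sigma_{\zeta'}$), a contradiction. Converting $5\delta$-fellow-travelling into eventual coincidence, where minimality and Lemma \ref{lemma:SubstitutionInCGR} really do the work, is the delicate point; everything else is bookkeeping.

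Granting the coincidence, a tail of $\Gamma_\zeta$ equals a left-translate of a tail of $\Gamma_{\zeta'}$, and passing to types (which are translation invariant) gives $s^{T^\zeta_0}(\sigma_\zeta)=s^{T^{\zeta'}_0+T^\zeta_1-T^{\zeta'}_1}(\sigma_{\zeta'})$ when $T^\zeta_1\ge T^{\zeta'}_1$, and the symmetric identity otherwise; hence
\[
\rho_s(\sigma_\zeta,\sigma_{\zeta'})\le T^\zeta_0+T^{\zeta'}_0+\bigl|T^\zeta_1-T^{\zeta'}_1\bigr|<8(r+3\delta+2),
\]
since each of $T^\zeta_0,T^\zeta_1,T^{\zeta'}_0,T^{\zeta'}_1$ is at most $2(r+3\delta+2)$. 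This gives the required partition, and the last assertion of the lemma is then immediate: distinct tail-equivalence classes of $S^\Nb$ lie at infinite $\rho_s$-distance (as $E_{\rho_s}$ is precisely tail equivalence), so each of the at most $(M_{5\delta})^2$ finite-diameter pieces meets only one of them.
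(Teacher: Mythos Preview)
Your overall strategy matches the paper's: translate so both rays converge to a common boundary point, apply Lemma~\ref{lemma:TubularLemma} to obtain $5\delta$-fellow-travelling past bounded times, and then invoke lexicographic minimality together with Lemma~\ref{lemma:SubstitutionInCGR} to cap the number of possible tails at $(M_{5\delta})^2$. (The paper translates $\Gamma_\zeta$ toward $\eta$ rather than $\Gamma_\eta$ toward $\zeta$, and fixes one of the two time-shifts at the common value $T=2(r+3\delta+2)$; these differences are cosmetic.) The gap is exactly at the step you yourself flag as delicate. First, your label $\ell(\zeta)$ is never made precise: you call it a pair in $B_d(e,5\delta)^2$ but describe only one relative position, and with both $T_0^\zeta$ and $T_1^\zeta$ allowed to vary, equal labels do not force $\Gamma_\zeta$ and $(g_\zeta g_{\zeta'}^{-1})\Gamma_{\zeta'}$ to pass through a common vertex. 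Second, and more seriously, the rerouting sketch misapplies Lemma~\ref{lemma:SubstitutionInCGR}: that lemma replaces a \emph{finite} geodesic segment of a CGR by another geodesic \emph{between the same two vertices}; it does not let you splice an \emph{infinite} tail of one CGR onto an initial segment of another, nor does it ``repair'' a non-geodesic concatenation into a CGR. Even granting a common vertex $v=\Gamma_\zeta(m)=h\Gamma_{\zeta'}(m')$ with $h=g_\zeta g_{\zeta'}^{-1}$, the path $(h\Gamma_{\zeta'}(0),\dots,v,\Gamma_\zeta(m+1),\Gamma_\zeta(m+2),\dots)$ need not be geodesic: Lemma~\ref{lemma:TubularLemma}\ref{lemma:item1} only yields $d(h,\Gamma_\zeta(m+n))\ge m'+n-2\delta$. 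What you are implicitly using is that a tail of a lex-minimal CGR is again lex-minimal from its new basepoint, and that is not what Lemma~\ref{lemma:SubstitutionInCGR} says.

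The paper sidesteps this by never comparing two $\zeta$'s head-to-head. For a single $\zeta$ it uses Lemma~\ref{lemma:SubstitutionInCGR} together with minimality of $g_\zeta\Gamma_\zeta\in\cgr(g_\zeta,\eta)$ to conclude that every \emph{finite} segment $(y_{T_\zeta},\dots,y_{T_\zeta+N})$ is the unique lex-minimal geodesic between its own endpoints; and those endpoints sit in the fixed balls $B_d(x_T,5\delta)$ and $B_d(x_{T+N},5\delta)$. Hence for each $N$ at most $(M_{5\delta})^2$ distinct length-$N$ segments arise as $\zeta$ ranges over $B_\rho(\eta,r)$; since the segments for different $N$ extend one another, at most $(M_{5\delta})^2$ infinite tails $(y_{T_\zeta+n})_{n\in\Nb}$ occur, and the $\rho_s$-diameter bound comes from $T_\zeta\le 2T=4(r+3\delta+2)$. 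If you want to salvage your version, the cleanest fix is to adopt the paper's normalisation (fix the index on the $\Gamma_\eta$ side at $T$) and then argue with finite segments and endpoint pairs rather than attempting to match infinite tails directly.
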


\begin{proof}
Given $\zeta \in \partial G$, let $\Gamma_\zeta \in \cgr(e,\zeta)$ be the unique CGR starting from the identity $e \in G$ such that
$\text{typ}(\Gamma_\zeta) = \sigma_\zeta$. Fix $\zeta \in B_\rho(\eta, r)$ and let $g_\zeta \in G$ be such that $d(e,g_\zeta) < r$ and $g_\zeta\zeta = \eta$. Set $\Gamma_\eta = (x_n)_{ n\in \Nb}$, $\Gamma_\zeta = (z_n)_{n \in \Nb}$ and let $g_\zeta \Gamma_\zeta = (y_n)_{n \in \Nb} \in G^\Nb$ be the sequence where $y_n = g_\zeta z_n$ for every $n \in \Nb$. Then
$g_\zeta \Gamma_\zeta\in \cgr(g_\zeta,\eta)$.

Set $T = 2(r + 3\delta + 2)$. By Lemma \ref{lemma:TubularLemma}, there is $0 \le T_\zeta \le 2T$ such that 
\[
d\brak{x_{T + n}, y_{T_\zeta + n}} \le 5\delta, \text{ for every } n \in \Nb.
\]
Note that $g_\zeta \Gamma_\zeta$ is the unique ray in $\cgr(g_\zeta,\eta)$
such that
\begin{equation} \label{eq:minimum}
\text{typ}(g_\zeta \Gamma_\zeta) = \min\nolimits_{\le_\lex}\{\text{typ}(\Gamma) \mid \Gamma \in \cgr(g_\zeta,\eta) \},
\end{equation}
since $\sigma_\zeta$ is the $\le_\lex$-minimum of $\text{typ}(\cgr(e,\zeta))$ and shifting by $g_\zeta$ gives a bijection
between $\cgr(e,\zeta)$ and $\cgr(g_\zeta,\eta)$ that preserves the type.

Fix $N \in \Nb$. By Lemma \ref{lemma:SubstitutionInCGR} and \eqref{eq:minimum}
the path $p = \brak{y_{T_\zeta+n}}_{n=0}^N$ is the unique geodesic between $y_{T_\zeta}$ and $y_{T_\zeta + N}$ such that
\[
\text{typ}(p) = \min\nolimits_{\le_\lex}\{q \in S^{N} \mid
q = \text{typ}(r) \text{ for some geodesic path } r \text{ from }
y_{T_\zeta} \text{ to } y_{T_\zeta+N} \}.
\]

Summarizing, the partial segment of $g_\zeta \Gamma_\zeta$ between the
coordinates $T_\zeta$ to $T_\zeta + N$ only depends on the initial point $y_{T_\zeta} \in B_{d}(x_{T}, 5\delta)$ and on $y_{T_\zeta + N} \in B_{d}(x_{T+N}, 5\delta)$. Since this is true
for every $N \in \Nb$, there are at most $(M_{5\delta})^2$ choices for
$\brak{y_{T_\zeta+n}}_{n\in \Nb}$ as $\zeta$ varies in $B_\rho(\eta, r)$.
Since $\text{typ}(g\Gamma_\zeta) =
\text{typ}(\Gamma_\zeta) = \sigma_\zeta$ for every $\zeta \in \partial G$ and every $g \in G$, the conclusion of the lemma follows.
\end{proof}

We are now ready to prove the main result.

\begin{proof}[Proof of Theorem \ref{thm:main}]
     To prove \ref{item:1}, let $s$ be the left shift on $S^\Nb$ and let $R$ denote the tail equivalence relation on $S^\Nb$.
    Consider the map $\Phi \colon \partial G \to S^\Nb$ from Definition \ref{def:Gamma_eta}, and let
    $R_0$ be the preimage of $R$ on $\partial G$ under $\Phi$:
    \[
    \eta R_0 \zeta \iff \sigma_\eta R \sigma_\zeta, \text{ for every } \eta,\zeta \in \partial G.
    \]

    Note that the equivalence relation $R$ is hyperfinite by \cite[Corollary 8.2]{DJK}, and thus $R_0$ is a Borel hyperfinite equivalence relation
    since $\Phi$ is a Borel injection by Proposition \ref{prop:Borel}. Let $R_1 = R_0 \cap E^{\partial G}_G$, which by \cite[Proposition 1.3.(i)]{JKL}
    is again hyperfinite (this step is actually redundant,
    as we show in Claim \ref{claim:bound}).
    Let $M_{5\delta} \in \Nb$ be an upper bound for the size of the balls of radius $5 \delta$ in $\text{Cay}(G,S)$.
    We claim that each $E^{\partial G}_G$-class is composed of
    at most $(M_{5\delta})^2$ equivalence
    classes of $R_1$. This will grant that $E^{\partial G}_G$ is hyperfinite by \cite[Proposition 1.3.(vii)]{JKL}.

    The claim is an immediate consequence of Lemma \ref{lemma:bound}, since if
    $\eta_0, \dots, \eta_{(M_{5\delta})^2}$ are elements in
    $\partial G$ in the same $E^{\partial G}_G$-class, then there exists $N > 0$ such that $\{\eta_k \}_{k \le (M_{5\delta})^2} \subseteq B_\rho(\eta_0, N)$,
    hence there are $i<j \le ({M_{5\delta}})^2$ such that $\sigma_{\eta_i} R \sigma_{\eta_j}$, and therefore $\eta_i R_1 \eta_j$.

    To prove \ref{item:2}, fix $r > 0$ and set $T = 8(r + 3\delta + 2)$. Consider $S^\Nb$ with the graph metric
    $\rho_s$ induced by the shift $s$. By \cite[Lemma 8.3]{ConJacMarSewTuc20}
    the space $(S^\Nb,\rho_s)$ has Borel asymptotic dimension at most 1. Hence
    there exists a uniformly bounded equivalence relation $ E$
    such that for every $x \in S^\Nb$ the ball $B_{\rho_s}(x, T)$, intersects at most
    2 equivalence classes of $ E$.
    
    Let $E_0$ be the preimage of $E$ via $\Phi$ from Definition \ref{def:Gamma_eta}, that is for $\eta, \zeta \in \partial G$
    we define
    \[
    \eta E_0 \zeta \iff \sigma_\eta  E \sigma_\zeta.
    \]
    Let $\rho$ be the graph metric on $\partial G$
    induced by the Schreier graph of the boundary action $G \curvearrowright
    \partial G$. The equivalence relation $E_0$ is Borel since $\Phi$ is Borel (Proposition \ref{prop:Borel}) and 
    it is uniformly bounded since $E$ is and because of the following claim.
    
    \begin{claim} \label{claim:bound}
    Let $\eta, \zeta \in \partial G$ Then
    \[ 
    \rho(\eta,\zeta) \le \rho_s(\sigma_\eta, \sigma_\zeta).
    \]
    \end{claim}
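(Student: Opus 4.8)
The plan is to reduce the inequality to a purely combinatorial fact about the shift graph, and then translate back to the boundary. If $\rho_s(\sigma_\eta,\sigma_\zeta)=\infty$ there is nothing to prove, so set $n:=\rho_s(\sigma_\eta,\sigma_\zeta)<\infty$ and fix a path $\sigma_\eta=\mu_0,\mu_1,\dots,\mu_n=\sigma_\zeta$ of length $n$ in $\mathcal{G}_s$. The first step is to ``straighten'' this path: there exist $k,l\ge 0$ with $k+l\le n$ and $s^k(\sigma_\eta)=s^l(\sigma_\zeta)$. This follows by an easy induction on $n$, splitting into cases according to whether the first edge $\{\mu_0,\mu_1\}$ witnesses $s(\mu_0)=\mu_1$ or $s(\mu_1)=\mu_0$ and then applying the inductive hypothesis to $\mu_1,\dots,\mu_n$. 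It is important to carry out this step before invoking any geometry, precisely because the intermediate vertices $\mu_1,\dots,\mu_{n-1}$ of the path need not be types of CGR, whereas each $s^j(\sigma_\eta)$ and $s^j(\sigma_\zeta)$ is.

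Next I would record the geometric meaning of iterating the shift. Write $\Gamma_\eta=(x_m)_{m\in\Nb}\in\cgr(e,\eta)$ for the unique CGR with $\mathrm{typ}(\Gamma_\eta)=\sigma_\eta$, as in the proof of Lemma \ref{lemma:bound}. For any $k$, the tail $(x_{k+m})_{m\in\Nb}$ is a CGR from $x_k$ to $\eta$, so translating it by $x_k^{-1}$ yields an element of $\cgr(e,x_k^{-1}\eta)$ whose type is exactly $s^k(\sigma_\eta)$; moreover $x_k$ is the product of the first $k$ letters of $\sigma_\eta$, hence $\abs{x_k}_S\le k$, where $\abs{\cdot}_S$ denotes word length. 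Symmetrically, $s^l(\sigma_\zeta)$ is the type of some CGR in $\cgr(e,y_l^{-1}\zeta)$ for a group element $y_l$ with $\abs{y_l}_S\le l$.

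Finally I would conclude as follows. Since $s^k(\sigma_\eta)=s^l(\sigma_\zeta)$, and two CGR starting at $e$ with the same type coincide (the fact used for injectivity of $\Phi$ in Proposition \ref{prop:Borel}), the two CGR built above are equal, so their endpoints agree: $x_k^{-1}\eta=y_l^{-1}\zeta=:\xi$, i.e.\ $\eta=x_k\xi$ and $\zeta=y_l\xi$. In the Schreier graph of $G\curvearrowright\partial G$ one has $\rho(\alpha,g\alpha)\le\abs{g}_S$ for every $\alpha\in\partial G$ and $g\in G$, since a geodesic word for $g$ spells out a path from $\alpha$ to $g\alpha$. Therefore
\[
\rho(\eta,\zeta)\le\rho(\eta,\xi)+\rho(\xi,\zeta)\le\abs{x_k}_S+\abs{y_l}_S\le k+l\le n=\rho_s(\sigma_\eta,\sigma_\zeta),
\]
as desired. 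The only genuinely delicate point is the straightening step of the first paragraph, and in particular resisting the tempting but incorrect idea of processing the $\mathcal{G}_s$-path vertex by vertex, since its interior vertices carry no boundary point; the remainder of the argument is routine bookkeeping with word lengths.
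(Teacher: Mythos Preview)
Your argument is correct and follows essentially the same route as the paper's proof. Both proofs reduce to finding $k,l$ with $k+l\le\rho_s(\sigma_\eta,\sigma_\zeta)$ and $s^k(\sigma_\eta)=s^l(\sigma_\zeta)$, then observe that the two translated tails of $\Gamma_\eta$ and $\Gamma_\zeta$ are CGR from $e$ with identical type, hence coincide, forcing $x_k^{-1}\eta=y_l^{-1}\zeta$ and giving the word-length bound on $\rho(\eta,\zeta)$. The only difference is that you spell out the ``straightening'' step (that $\rho_s(x,y)=\min\{k+l:s^k(x)=s^l(y)\}$) by induction on the path length, whereas the paper simply asserts the existence of such $n_0,n_1$; your added care here is welcome but not a genuinely different idea.
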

    \begin{proof}
    Suppose that there is $M\in \Nb$ such that $\rho_s(\sigma_\eta,\sigma_\zeta) \le M$.
    There exist thus $n_0, n_1 \in \Nb$ with $n_0 + n_1 \le M$
    and such that $s^{n_0}(\sigma_\eta) = s^{n_1}(\sigma_\zeta) = \bar s$. Let $s_1, \dots, s_{n_0}, t_1, \dots, t_{n_1}
    \in S$ be such that 
    \begin{equation} \label{eq:tail}
    \sigma_\eta = (s_1,\dots, s_{n_0})^\smallfrown \bar s \text{ and }\sigma_\zeta = (t_1, \dots, t_{n_1})^\smallfrown \bar s.
    \end{equation}
    Let $\Gamma_\eta\in \cgr(e,\eta)$ and $\Gamma_\zeta \in \cgr(e,\zeta)$ be the unique CGR starting from $e$ such that $\text{typ}(\Gamma_\eta) = \sigma_\eta$ and $\text{typ}(\Gamma_\zeta)
    = \sigma_\zeta$. Set $g = (s_1s_2 \dots s_{n_0})^{-1}$ and $h = (h_1 h_2 \dots h_{n_1})^{-1}$. Consider finally
    the CGR $g\Gamma_\eta$, whose $n$-th entry is equal to
    $g\Gamma_\eta(n)$, and $h\Gamma_\zeta$, whose $n$-th entry is equal to $h\Gamma_\zeta(n)$. It follows from \eqref{eq:tail}, and from the definition of $g$ and $h$,
    that $g\Gamma_\eta$ and $h\Gamma_\zeta$ are tail equivalent in $G^\Nb$, hence they converge to the same $\theta \in \partial G$. In particular, this shows that
    $g\eta = h\zeta = \theta$, and therefore $\rho(\eta, \zeta)
    \le M$.
    \end{proof}
    
    We claim that, given $\eta \in \partial G$, the ball $B_\rho(\eta, r)$ meets
    at most $2(M_{5\delta})^2$ classes of $E_0$.
    Take $\eta_0, \dots, \eta_{2(M_{5\delta})^2} \in B_\rho(\eta, r)$.
    By Lemma \ref{lemma:bound} there are $k_0<k_1< k_2 \le 2(M_{5\delta})^2$ such that
    $\{\sigma_{\eta_{k_i}} \}_{i=0,1,2}$ are pairwise tail equivalent,
    and moreover
    \[
    \rho_s(\sigma_{\eta_{k_i}} ,\sigma_{\eta_{k_j}}) \le T,  \text{ for every } i,j = 0,1,2.
    \]
    By the choice of $E$, there are distinct $i, j \in \{0, 1, 2\}$ such that $\sigma_{\eta_{k_i}} E \sigma_{\eta_{k_j}}$,
    and therefore $\eta_{k_i} E_0 \eta_{k_j}$. We can thus conclude that $B_\rho(\eta, r)$ intersects at most $2(M_{5\delta})^2$ classes of $E_0$.
\end{proof}

\bibliography{aux/main.bib}
\end{document}